\newtheorem{theorem}{Theorem}[section]
\newtheorem{lemma}[theorem]{Lemma}
\newtheorem{example}[theorem]{Example}
\newtheorem{remark}[theorem]{Remark}
\newtheorem{corollary}[theorem]{Corollary}
\numberwithin{equation}{section}
\newcommand{\RR}{\mathbb R}
\newcommand{\R}{\mathbb R}
\newcommand{\erre}{\mathbb R}
\newcommand{\N}{\mathbb N}
\newcommand{\enne}{\mathbb N}
\renewcommand{\leq}{\leqslant}
\renewcommand{\geq}{\geqslant}
\begin{document}
%\hfill\today\bigskip

\title[On nonlocal Dirichlet problems with oscillating term]{On nonlocal Dirichlet problems\\ with oscillating term}

%%%%%%%%%%%%%%%%%%%%%%%%%%%%%%%%%%%%%%%%%%%%%%%%%%%%%%%%%%%%%%%%%%%%%%%
\author[B. Gabrov\v{s}ek]{Bo\v{s}tjan Gabrov\v{s}ek}
\address[Bo\v{s}tjan Gabrov\v{s}ek]{Faculty of Mechanical Engineering, and Faculty of Mathematics and Physics\\
University of Ljubljana \& Institute of Mathematics, Physics and Mechanics\\
1000 Ljubljana, Slovenia}
\email{\tt bostjan.gabrovsek@fs.uni-lj.si}

\author[G. Molica Bisci]{Giovanni Molica Bisci}
\address[Giovanni Molica Bisci]{Dipartimento di Scienze Pure e Applicate (DiSPeA), Universit\`a degli Studi di Urbino
Carlo Bo, Piazza della Repubblica 13, 61029 Urbino (Pesaro e Urbino), Italy}
\email{\tt giovanni.molicabisci@uniurb.it}

\author[D.D. Repov\v{s}]{Du\v{s}an D. Repov\v{s}}
\address[Du\v{s}an D. Repov\v{s}]{Faculty of Education, and Faculty of Mathematics and Physics\\
University of Ljubljana \& Institute of Mathematics, Physics and Mechanics\\
1000 Ljubljana, Slovenia}
\email{\tt dusan.repovs@guest.arnes.si}

\keywords{Variational methods, $p$-fractional Laplacian operator, infinitely many solutions.\\
\phantom{aa} Math. Subj. Classif. (2010): Primary:
47J30, 35R11;
Secondary: 35S15, 35A15.}

%%%%%%%%%%%%%%%%%%%%%%%%%%%%%%%%%%%%%%%%%%%%%%%%%%%%%%%%%%%%%%%%%%%%%%%%

\dedicatory{Dedicated to the loving memory of Gaetana Restuccia}
\begin{abstract}
In this paper, a class of nonlocal fractional Dirichlet problems is studied. By using a variational principle due to
 Ricceri
 (whose original version was given
 in  J. Comput. Appl. Math. {113} (2000), 401--410),
the existence of infinitely many weak solutions for 
these problems is
established
by requiring that the nonlinear term $f$ has a suitable oscillating behaviour either at the origin or at infinity.
\end{abstract}

\maketitle 

\section{Introduction}\label{section1}

In the present paper we deal with the following nonlocal fractional problem
\begin{equation}\label{problem}
\left\{ \begin{array}{ll}
(-\Delta)^{s}_{p}u= \lambda \alpha(x)f(u) & \mbox{in} \, \ \ \ \Omega\\
u=0 & \mbox{in} \, \ \ \  \RR^{N}\setminus \Omega, \end{array} \right.
\end{equation}
where $\Omega\subset \RR^N$ is a bounded domain with a smooth (Lipschitz) boundary $\partial\Omega$ and Lebesgue measure $|\Omega|$, $s\in (0, 1)$, $p>N/s$, $\lambda\in \RR$, while $\alpha \in L^{\infty}(\Omega)$ with $\alpha_0:={\rm essinf}_{x\in \Omega}\alpha(x)>0$ and the reaction term $f:\R\to \RR$ is a suitable continuous function. Finally, the leading operator $(-\Delta)^{s}_{p}$ in \eqref{problem} is the degenerate fractional $p$-Laplacian, defined for all $u:\R^N\rightarrow \R$ smooth enough, and
$x \in \R^N$ by
$$
(-\Delta)^{s}_{p}(x):=\displaystyle 2\lim_{\varepsilon\rightarrow 0^+}\int_{\R^{N}\setminus B_\varepsilon(x)}
\frac{|u(x)-u(y)|^{p-2}(u(x)-u(y))}{|x-y|^{N+ps}} dx,
$$
which for $p = 2$ reduces to the linear fractional Laplacian,
 up to a dimensional constant $C(N, s) > 0$; see, for instance,  \cite{Pal11, valpal, FraPa, MRS}.\par

Since elliptic problems involving the fractional $p$-Laplacian operator have been  intensively studied in recent years by several authors, a bibliography list is always far from being complete. To avoid this, we  mention here only the papers \cite{DQ, FMS, I1, I2,I3} and \cite{LL,
MawhinMolica,
Pei, Pu2, Pucci4}, as well as the references therein.\par
Motivated by the wide interest
in problem \eqref{problem}, and in order to treat it, we crucially use
   that, in our setting, the nonlocal fractional Sobolev space
$$X^{s,p}_0(\Omega)=\{u\in W^{s,p}(\R^N): u=0\,\, \mbox{a.e. in}\,\,
\RR^N\setminus
\Omega \},$$
endowed by the norm
$$
\|u\|:=\left(\displaystyle \iint_{\R^{N}\times\R^N}\frac{|u(x)-u(y)|^p}{|x-y|^{N+ps}}\,dxdy\right)^{1/p},
$$
is compactly embedded into the space $C^0(\bar \Omega)$ of continuous functions
 up to the boundary $\partial \Omega$. This regularity result allow us to obtain essential analytical properties of the Euler--Lagrange functional associated to \eqref{problem} in the low-dimensional case; see \cite{Proca,FIK,k} for related topics.\par

Inspired by the results contained in \cite{AM,Proca, OO, OZ2, OZ} and invoking Lemma \ref{EmT}, we then study the number and the asymptotic behavior of the solutions of problem~\eqref{problem}, when $f$ {oscillates near the origin} or {at the infinity}. This analysis is carried
out
 by exploiting variational and topological techniques; see Theorem \ref{BMB} below and \cite[Theorem 2.5]{Ricceri}.\par
 More precisely, fixed $L\in \{0^+,\infty\}$, let
$$
A_L:=\liminf_{t\rightarrow L}\frac{\max_{|\zeta|\leq
t}F(\zeta)}{t^p} \quad\mbox{ and}\quad B_L:=\limsup_{t\rightarrow L
}\frac{F(t)}{t^p},
$$
where $$
F(t):=\displaystyle\int_0^{t}f(\zeta) d\zeta,
\
\hbox{for every}
\
t\in \erre.$$ With the above notations, let us define
\begin{equation*}\label{l1}
\lambda_1^{L}:=\kappa_{p,N,s}\frac{{ \omega_N}}{p \tau^{sp}\alpha_0}\frac{2^N}{B_L}\quad\hbox{ and}\quad
\lambda_2^{L}:=\frac{1}{p\|\alpha\|_\infty |\Omega|K^pA_L},
\end{equation*}
where $
\omega_N
$ denotes the volume of the unit ball in $\R^N,$
$$
\kappa_{p,N,s}:=\displaystyle \frac{{2^{p(3-s)-N}}}{p}\left(1-\frac{1}{2^N}\right)^2+\frac{2^{2+ps-N}}{ps(N+p(1-s))}+\frac{2}{(N-ps)ps}\left(1-\frac{1}{2^{N-ps}}\right),
$$
$$
\tau:=\sup_{x\in\Omega}{\rm dist}(x,\partial\Omega),
\quad
\hbox{and}
\quad
K:=\sup\left\{\frac{\|u\|_\infty}{\|u\|}:u\in X^{s,p}_0(\Omega)\setminus\{0\}\right\}.
$$

The main result reads as follows.

\begin{theorem}\label{Main1}
 Assume that
 $$\displaystyle
 \inf_{t\geq 0}F(t)=0
 \quad
 \hbox{and}
 \quad
\displaystyle { \liminf_{t\rightarrow L}\frac{\max_{|\zeta|\leq
t}F(\zeta)}{t^p}<C \limsup_{t\rightarrow L
}\frac{F(t)}{t^p},}
$$
where $C=C(p,N,s, \alpha,\tau,|\Omega|,K)$ is the geometric constant given by
\begin{equation}\label{Constant}
\displaystyle C:=\left(\frac{\tau^{sp}}{2^N \kappa_{p,N,s}K^p|\Omega|\omega_N}\right)\frac{\alpha_0}{\|\alpha\|_\infty}.
\end{equation}
\indent Then for every $\lambda\in (\lambda_1^{L},\lambda_2^{L}),$ problem \eqref{problem}
admits a sequence $(u_{\lambda,j})_j$ of weak solutions in the fractional Sobolev space $X_0^{s,p}(\Omega)$.\par
Moreover,
$\displaystyle\lim_{j\rightarrow \infty}\|u_{\lambda,j}\|=\infty$
if $L=\infty,$ and
$
\displaystyle \lim_{j\rightarrow \infty}\|u_{\lambda,j}\|=\lim_{j\rightarrow \infty}\|u_{\lambda,j}\|_\infty=0
$
if $L=0^+$.
\end{theorem}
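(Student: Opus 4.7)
The plan is to recast \eqref{problem} variationally on $X:=X_0^{s,p}(\Omega)$ and apply Ricceri's abstract principle \cite[Theorem 2.5]{Ricceri} to
\[
J_\lambda:=\Phi-\lambda\Psi,\qquad \Phi(u):=\tfrac{1}{p}\|u\|^p,\qquad \Psi(u):=\int_\Omega \alpha(x)\,F(u(x))\,dx.
\]
The structural assumptions of Ricceri's theorem are quickly verified: $\Phi$ is coercive, convex, of class $C^1$ and sequentially weakly lower semicontinuous, while the compact embedding recorded in Lemma \ref{EmT}, together with the continuity of $f$, makes $\Psi$ sequentially weakly continuous with compact derivative. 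Critical points of $J_\lambda$ then coincide with the weak solutions of \eqref{problem}.

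\medskip

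The abstract theorem yields infinitely many distinct local minima of $J_\lambda$ provided that
\[
\gamma_L:=\liminf_{r\to L'}\;\inf_{\Phi(u)<r}\frac{\sup_{\Phi(v)<r}\Psi(v)-\Psi(u)}{r-\Phi(u)}
\]
(with $L'=+\infty$ when $L=\infty$ and $L'=0^+$ when $L=0^+$) satisfies $\gamma_L<1/\lambda$, and, in addition, $J_\lambda$ is unbounded from below (case $L=\infty$) or $0$ fails to be a local minimum of $J_\lambda$ (case $L=0^+$). To bound $\gamma_L$ from above I would test with $u=0$, which is admissible because $F(0)=0=\inf_{t\ge0}F$, and control $\sup_{\Phi(v)<r}\Psi(v)$ via the embedding $\|v\|_\infty\le K\|v\|$: from $\Phi(v)<r$ one gets $\|v\|_\infty\le K(pr)^{1/p}$, and therefore
\[
\gamma_L\le\limsup_{r\to L'}\frac{\|\alpha\|_\infty|\Omega|\max_{|\zeta|\le K(pr)^{1/p}}F(\zeta)}{r}=p\|\alpha\|_\infty|\Omega|K^p A_L=\frac{1}{\lambda_2^L},
\]
so $\gamma_L<1/\lambda$ for every $\lambda<\lambda_2^L$.

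\medskip

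To secure the second Ricceri requirement I would plug into $J_\lambda$ a concrete radial cut-off centred at a point $x_0\in\Omega$ with $\dist(x_0,\partial\Omega)=\tau$ (this sup is attained, since $\dist(\cdot,\partial\Omega)$ is continuous on the compact set $\overline{\Omega}$),
\[
w_\xi(x):=\begin{cases}\xi,&|x-x_0|\le\tau/2,\\[2pt] \xi\,\dfrac{2(\tau-|x-x_0|)}{\tau},&\tau/2<|x-x_0|\le\tau,\\[2pt] 0,&\text{otherwise,}\end{cases}
\]
evaluated along a sequence $\xi=\xi_n\to L$ realising the $B_L$-limsup. The Gagliardo seminorm $\|w_\xi\|^p$ would be computed by splitting $\R^{2N}$ into the products built from $B_{\tau/2}(x_0)$, the annulus $B_\tau(x_0)\setminus B_{\tau/2}(x_0)$, and $\R^N\setminus B_\tau(x_0)$; after scaling to unit radius, the three non-trivial contributions (cap times complement, cap times annulus plus annulus times complement, and annulus times annulus) yield exactly the three summands defining $\kappa_{p,N,s}$, giving $\|w_\xi\|^p\le 2^N\kappa_{p,N,s}\omega_N\tau^{N-ps}|\xi|^p$. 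Since $F\ge 0$ on $[0,\infty)$,
\[
\Psi(w_\xi)\ge \alpha_0 F(\xi)\omega_N(\tau/2)^N-\|\alpha\|_\infty\omega_N\!\left(\tau^N-(\tau/2)^N\right)\!\max_{|\zeta|\le|\xi|}F(\zeta);
\]
a direct comparison then shows that $\lambda>\lambda_1^L$ is exactly what forces $J_\lambda(w_{\xi_n})\to-\infty$ when $L=\infty$ and $\inf_{\|u\|<\varepsilon}J_\lambda<J_\lambda(0)=0$ for arbitrarily small $\varepsilon>0$ when $L=0^+$.

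\medskip

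Combining both estimates, for every $\lambda\in(\lambda_1^L,\lambda_2^L)$ Ricceri's principle delivers a sequence $(u_{\lambda,j})_j$ of pairwise distinct local minima of $J_\lambda$, hence of weak solutions of \eqref{problem}. When $L=\infty$, these minima attain divergent energy levels by the construction above, and coercivity of $\Phi$ forces $\|u_{\lambda,j}\|\to\infty$; when $L=0^+$, the theorem locates them in arbitrarily small sublevels of $\Phi$, which gives $\|u_{\lambda,j}\|\to 0$ and, via the embedding, $\|u_{\lambda,j}\|_\infty\to 0$. The compatibility condition $A_L<CB_L$ with $C$ as in \eqref{Constant} is just the algebraic restatement of $\lambda_1^L<\lambda_2^L$. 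The main technical hurdle I anticipate is the explicit seminorm estimate on $w_\xi$: producing the \emph{precise} numerical constant $\kappa_{p,N,s}$, rather than an abstract $O(\tau^{N-ps}|\xi|^p)$ bound, requires careful bookkeeping of each of the three non-trivial integrals after the passage to polar coordinates.
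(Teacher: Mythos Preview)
Your strategy coincides with the paper's: same functionals $\Phi,\Psi$ on $X_0^{s,p}(\Omega)$, same appeal to Ricceri's principle (Theorem~\ref{BMB}), same piecewise-affine radial test function centred at the Chebyshev centre of $\Omega$, and the same decomposition of $\R^{2N}$ into products of $B_{\tau/2}(x_0)$, the annulus, and the complement to estimate its Gagliardo seminorm (this is the content of the paper's Lemma~\ref{lemma1}). Two points, however, would prevent your argument from producing the stated threshold $\lambda_1^L$.

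First, the seminorm constant: Lemma~\ref{lemma1} gives $\|w_\xi\|^p\le \kappa_{p,N,s}\,\omega_N^{2}\,\tau^{N-ps}|\xi|^p$, with $\omega_N^2$ rather than your $2^N\omega_N$; this discrepancy would shift the threshold by the factor $2^N/\omega_N$. Second, and more substantively, your lower bound on $\Psi(w_\xi)$ is too weak to close the argument. Since $F\ge 0$ on $[0,\infty)$ (this is precisely where the hypothesis $\inf_{t\ge 0}F(t)=0$ enters) and $0\le w_\xi\le\xi$ on the annulus $B_\tau(x_0)\setminus B_{\tau/2}(x_0)$, the annulus contribution to $\Psi(w_\xi)$ is \emph{nonnegative} and should simply be discarded, yielding
\[
\Psi(w_\xi)\ge \alpha_0\,\omega_N(\tau/2)^N F(\xi),
\]
which is exactly what the paper uses. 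Your subtracted term $-\|\alpha\|_\infty\omega_N(\tau^N-(\tau/2)^N)\max_{|\zeta|\le|\xi|}F(\zeta)$ is not only superfluous but fatal: along the sequence $(\xi_n)$ realising $B_L$, the ratio $\max_{|\zeta|\le\xi_n}F(\zeta)/\xi_n^p$ is uncontrolled (there is no reason it should approach $A_L$, nor even remain bounded), so the comparison meant to force $J_\lambda(w_{\xi_n})\to-\infty$ whenever $\lambda>\lambda_1^L$ breaks down. With these two corrections your proof and the paper's are identical.
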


A special and a meaningful case of Theorem \ref{Main1} is the following.

\begin{corollary}\label{Main2}
Assume that $f$ is nonnegative with $f(0)=0$. Furthermore, suppose that
\begin{equation}\label{OMZaIntroinfty}
\displaystyle \liminf_{t\rightarrow L}\frac{F(t)}{t^p}=0\,\,\,\mbox{ and } \,\,\,\displaystyle \limsup_{t\rightarrow L
}\frac{F(t)}{t^p}=\infty.
\end{equation}
Then for every $\lambda>0$ problem \eqref{problem}
admits a sequence $(u_{\lambda,j})_j$ of nonnegative weak solutions in the fractional Sobolev space $X_0^{s,p}(\Omega)$.\par
Moreover,
$\displaystyle\lim_{j\rightarrow \infty}\|u_{\lambda,j}\|=\infty$
if $L=\infty,$ and $
\displaystyle \lim_{j\rightarrow \infty}\|u_{\lambda,j}\|=\lim_{j\rightarrow \infty}\|u_{\lambda,j}\|_\infty=0
$
if $L=0^+$.
\end{corollary}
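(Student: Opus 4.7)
The plan is to reduce the statement to Theorem \ref{Main1} by truncating $f$ so as to both eliminate the geometric constant $C$ from the oscillation hypothesis and to force \emph{a posteriori} nonnegativity of the solutions produced. Set
$$
\tilde f(t):=\begin{cases} f(t) & \text{if }t\geq 0,\\ 0 & \text{if }t<0,\end{cases}\qquad \tilde F(t):=\int_0^{t}\tilde f(\zeta)\,d\zeta;
$$
then $\tilde f$ is continuous on $\R$ (since $f(0)=0$), nonnegative, and $\tilde F$ is non-decreasing with $\tilde F(t)=F(t)$ for $t\geq 0$ and $\tilde F(t)=0$ for $t<0$.

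First I would verify that $\tilde f$ fits the framework of Theorem \ref{Main1}. Monotonicity of $\tilde F$ gives $\max_{|\zeta|\leq t}\tilde F(\zeta)=\tilde F(t)=F(t)$ for every $t>0$, so by \eqref{OMZaIntroinfty}
$$
A_L(\tilde f)=\liminf_{t\to L}\frac{F(t)}{t^p}=0,\qquad B_L(\tilde f)=\limsup_{t\to L}\frac{F(t)}{t^p}=+\infty.
$$
Hence the oscillation inequality $A_L<C\,B_L$ is trivially satisfied, $\inf_{t\geq 0}\tilde F(t)=\tilde F(0)=0$, and the admissible parameter range becomes $(\lambda_1^L,\lambda_2^L)=(0,+\infty)$. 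Applying Theorem \ref{Main1} to $\tilde f$ then produces, for every $\lambda>0$, a sequence $(u_{\lambda,j})_j\subset X_0^{s,p}(\Omega)$ of weak solutions of the truncated problem $(-\Delta)^{s}_{p}u=\lambda\alpha(x)\tilde f(u)$ in $\Omega$, $u=0$ in $\R^{N}\setminus\Omega$, with precisely the announced asymptotic behavior.

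The step that remains---and which I expect to be the main technical point---is to promote each $u_{\lambda,j}$ to a nonnegative function, so that $\tilde f(u_{\lambda,j})=f(u_{\lambda,j})$ almost everywhere and $u_{\lambda,j}$ solves \eqref{problem}. I would test the weak formulation against the admissible function $-u_{\lambda,j}^{-}\in X_0^{s,p}(\Omega)$. The right-hand side vanishes because $\tilde f(u_{\lambda,j})\,u_{\lambda,j}^{-}\equiv 0$ (the two factors have disjoint supports). For the left-hand side I would invoke the elementary pointwise inequality
$$
|a-b|^{p-2}(a-b)\bigl(b^{-}-a^{-}\bigr)\geq |a^{-}-b^{-}|^{p}\qquad\text{for all }a,b\in\R,
$$
proved by a case split on the signs of $a$ and $b$. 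Integrating over $\R^{N}\times\R^{N}$ against the kernel $|x-y|^{-(N+ps)}$ then yields $\|u_{\lambda,j}^{-}\|^{p}\leq 0$, forcing $u_{\lambda,j}^{-}\equiv 0$ and concluding the argument.
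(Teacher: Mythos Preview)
Your proposal is correct and follows essentially the same route as the paper: truncate $f$ to $\tilde f$ (the paper's $f^+$), observe that nonnegativity of $f$ collapses $\max_{|\zeta|\le t}\tilde F(\zeta)$ to $F(t)$ so that $A_L=0$, $B_L=\infty$ and Theorem~\ref{Main1} applies for all $\lambda>0$, and then recover nonnegativity by testing with $-u^-$ and using the pointwise inequality $|a-b|^{p-2}(a-b)(b^--a^-)\ge|a^--b^-|^{p}$. The paper carries out exactly this argument.
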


\indent We notice that the existence of sequences of weak solutions for fractional nonlocal equations, without any symmetry hypothesis on the nonlinear term $f$, has been investigated in \cite[Theorems 5 and 6]{ADM}. However, in the low-dimensional case treated here, it can be easily seen that Theorem \ref{Main1} is
more general than the results proved in the aforementioned paper. We refer to the monograph \cite{MRS} as a general reference for nonlocal problems and variational methods used in this manuscript.

\section{Fractional framework}\label{section2}
This section is devoted to the notations used throughout the paper. In order to give the weak formulation of problem~\eqref{problem}, we need to work in a special functional space. Indeed, one of the difficulties in treating problem~\eqref{problem} is related to encoding the Dirichlet boundary condition in the variational formulation. In this respect, the standard fractional Sobolev spaces are not sufficient in order to study this problem. We overcome this difficulty by working in a new functional space, whose definition is  recalled here.\par
Let $\Omega$ be a bounded domain in $\erre^N$ with smooth (Lipschitz) boundary, fix $s\in (0,1)$ and take $p>N/s$. Let
$$
W^{s,p}(\erre^N):=\left\{u\in L^p(\erre^N):\iint_{\R^{N}\times\R^N}\frac{|u(x)-u(y)|^p}{|x-y|^{N+ps}}\,dxdy<\infty\right\}
$$
be the fractional space endowed with the norm
$$
\|u\|_{s,p}:=\left(\int_{\erre^N}|u(x)|^pdx+\displaystyle \iint_{\R^{N}\times\R^N}\frac{|u(x)-u(y)|^p}{|x-y|^{N+ps}}\,dxdy\right)^{1/p},
u\in W^{s,p}(\erre^N).$$\par
We work on the closed linear subspace defined by
$$X_0^{s,p}(\Omega):=\{u\in W^{s,p}(\erre^N) : u=0\,\, \mbox{a.e. in}\,\,
\RR^N\setminus
\Omega\},$$
and equivalently renormed by setting
$$
\|u\|:=\left(\displaystyle \iint_{\R^{N}\times\R^N}\frac{|u(x)-u(y)|^p}{|x-y|^{N+ps}}\,dxdy\right)^{1/p},
\
u\in X^{s,p}_0(\Omega),$$ namely,
 the Poincar\'{e} inequality holds in $X_0^{s,p}(\Omega)$. The following Rellich-type result will be crucial for our purposes.

\begin{lemma}\label{EmT}
Let $\Omega\subset \RR^N$ be a bounded domain with Lipschitz boundary and let $p >1,$ $s \in (0,1)$ such that $sp > N$.
Then the embedding $$X_0^{s,p}(\Omega)\hookrightarrow C^{0}(\bar\Omega)$$ is compact.
\end{lemma}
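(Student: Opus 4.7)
The plan is to split the embedding through a Hölder space and then invoke the Ascoli--Arzelà theorem. Specifically, since $sp>N$, I expect
$$X_0^{s,p}(\Omega)\hookrightarrow C^{0,\alpha}(\bar\Omega)\hookrightarrow C^0(\bar\Omega),$$
with $\alpha:=s-N/p\in(0,1)$; the first arrow continuous and the second compact. The key input is a fractional Morrey-type inequality, which upgrades the finiteness of the Gagliardo seminorm into pointwise Hölder regularity of $u$.

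First, I would establish the pointwise bound
$$|u(x)-u(y)|\leq C_{N,p,s}\,\|u\|\,|x-y|^{s-N/p},\qquad x,y\in\R^N,$$
for every $u\in X_0^{s,p}(\Omega)$ extended by zero outside $\Omega$. The standard route is the chain-of-balls argument: write $u(x)-u(y)$ as a telescoping sum of averages of $u$ over a shrinking sequence of balls centered at $x$ and $y$, and estimate each increment via Hölder's inequality by a dyadic piece of the Gagliardo integral. In parallel, using a single-ball average together with the fact that $u$ vanishes on a set of positive measure in each large ball containing $\Omega$ (a Poincaré-type step exploiting that $\Omega$ is bounded), one obtains the companion estimate $\|u\|_\infty\leq C\|u\|$.

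Together these two estimates give a continuous embedding $X_0^{s,p}(\Omega)\hookrightarrow C^{0,\alpha}(\bar\Omega)$. Compactness into $C^0(\bar\Omega)$ then follows at once from the Ascoli--Arzelà theorem: any bounded sequence $(u_n)\subset X_0^{s,p}(\Omega)$ is uniformly bounded and uniformly equicontinuous on the compact set $\bar\Omega$ (the Hölder seminorm controlling equicontinuity), hence admits a uniformly convergent subsequence in $C^0(\bar\Omega)$.

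The main technical obstacle is the fractional Morrey inequality itself: there is no one-line derivation, and the telescoping argument over dyadic scales must be executed with care to keep the correct Hölder exponent $s-N/p$ and to ensure that the constant depends only on $N,p,s$ (and on $\Omega$ for the sup-norm bound). This inequality is by now classical for $W^{s,p}(\R^N)$ with $sp>N$, so I would either cite it from a standard reference on fractional Sobolev spaces, or adapt the short proof available in the monograph \cite{MRS} quoted in the text.
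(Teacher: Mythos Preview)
Your proposal is correct and follows essentially the same strategy as the paper: obtain the fractional Morrey estimate $|u(x)-u(y)|\le C\|u\|\,|x-y|^{s-N/p}$ together with the sup-norm bound, and then conclude compactness via Arzel\`a--Ascoli. The paper phrases this as a direct equiboundedness/equicontinuity verification (citing \cite{Demengel} and \cite{valpal} for the Morrey inequality through a Campanato seminorm rather than your telescoping argument), while you package the same content as a factorization through $C^{0,\alpha}(\bar\Omega)$---a presentation the paper itself mentions in Remark~\ref{RemarkEmT}.
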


\begin{proof}
Since $p>N/s$, by \cite[Theorem 4.47]{Demengel} it follows that $X_0^{s,p}(\Omega)\subset W^{s,p}(\erre^N)$ is continuously embedded into $C^0(\bar\Omega)$; see also \cite[Theorem 8.2]{valpal}.
Now, in order to show that this embedding is also compact, let $B$ be a bounded subset of $X_0^{s,p}(\Omega)$ and let us prove that $B$ is relatively compact in
$C^0(\bar \Omega)$. By virtue of the Arzel\`{a}--Ascoli theorem, the conclusion will be achieved by proving that $B$ is equibounded and equicontinuous on $C^0(\bar\Omega)$. To this end, since $X_0^{s,p}(\Omega)$ by \cite[Theorem 4.47]{Demengel}, is continuously embedded in $C^0(\bar \Omega),$ there exists a constant $c_1>0$
such that
$$
\|u\|_{\infty}\leq c_1 \|u\|,\quad \mbox{for   every} \,  u\in B.
$$
Hence, the set $B$ is equibounded in $C^0(\bar\Omega)$.
Moreover, arguing as in the proof \cite[Theorem 8.2]{valpal}, for every $u\in B,$
the following Morrey-type inequality holds
\begin{equation}\label{Ab}
|u(x)-u(y)|\leq c_2 \|u\|_{s,p}|x-y|^{s-N/p}, \quad \mbox{for  every}\,\,\,  x,y\in\R^N
\end{equation}
for some constant $c_2>0$. Indeed, by formula (8.8) in \cite{valpal}, it follows that
\begin{equation*}
|u(x)-u(y)|\leq c [u]_{p,ps}|x-y|^{s-N/p}, \quad \mbox{for  every}\,\,\,  x,y\in\R^N,
\end{equation*}
where
$$
[u]_{p,sp}:=\left(\sup_{x_0\in \Omega\, \rho>0}\rho^{-sp}\int_{B_\rho(x_0)\cap\Omega}|u(x)-\langle u\rangle_{B_\rho(x_0)\cap\Omega}|^pdx\right)^{1/p},
$$
with
$$
\langle u\rangle_{B_\rho(x_0)\cap\Omega}:=\frac{1}{|B_\rho(x_0)\cap\Omega|}\int_{B_\rho(x_0)\cap\Omega}u(x)dx.
$$
Consequently,
 \eqref{Ab} has been proved by \cite[formula 8.4]{valpal}.
Finally,
\begin{equation}\label{Ac}
\displaystyle \|u\|_{s,p}\leq c_3\left(\iint_{\R^{N}\times\R^N}\frac{|u(x)-u(y)|^p}{|x-y|^{N+ps}}\,dxdy\right)^{1/p},
\
\hbox{for every}
\
u\in B.
\end{equation}
In conclusion, by combining \eqref{Ac} with \eqref{Ab} the equicontinuity of $B$ easily follows. This completes the proof
of Lemma~\ref{EmT}.
\end{proof}

Note that, since the embedding $X_0^{s,p}(\Omega)\hookrightarrow C^{0}(\bar\Omega)$ is continuous, it follows that
$$
K:=\sup\left\{\frac{\|u\|_\infty}{\|u\|}:u\in X^{s,p}_0(\Omega)\setminus\{0\}\right\}<\infty.
$$
It remains an open problem to determine an explicit upper bound for the constant $K$.
\begin{remark}\label{RemarkEmT}
\rm{We note that a more precise version of Lemma \ref{EmT} can be proved by using \cite[Lemma 2.85]{Demengel}. More precisely, if $p>N/s$ and
$C^{0,\mu}_b(\bar\Omega)$ denotes the space of H\"{o}lder continuous
functions of order $\mu$ on $\bar\Omega$, the embedding $X_0^{s,p}(\Omega)\hookrightarrow C^{0,\mu}_b(\bar\Omega)$ is compact provided that $\mu<s-N/p$. The above regularity argument,
with a slight
modification,
 seems
 to work also in anisotropic fractional Sobolev spaces; see \cite{RRR} for related topics.}
\end{remark}

 For further details on the fractional Sobolev spaces we refer to~\cite{valpal, MRS} and to the references therein.\par
Let us fix $\lambda\in \R$. We recall that a \textit{weak solution} for  problem \eqref{problem}, is a function $u:\Omega\to \RR$ such that
\begin{equation}\label{problemaK0fnew}
\left\{\begin{array}{l} {\displaystyle \iint_{\R^{N}\times\R^N }
\frac{|u(x)-u(y)|^{p-2}(u(x)-u(y))(\varphi(x)-\varphi(y))}{|x-y|^{N+ps}} dx dy}\\
\qquad \qquad\qquad \qquad =\lambda \displaystyle\int_\Omega \alpha(x)f(u(x))\varphi(x)dx,
\,\,\,\mbox{for every} \  \varphi \in X_0^{s,p}(\Omega)\\
u\in X_0^{s,p}(\Omega)\,.
\end{array}\right.
\end{equation}

\indent Let $\mathcal J_{\lambda}:X_0^{s,p}(\Omega)\to \RR$ be defined as follows
\begin{equation}\label{Funzionale}
\begin{aligned}
\mathcal J_{\lambda}(u)& :=\frac 1 p \iint_{\R^{N}\times\R^N}\frac{|u(x)-u(y)|^p}{|x-y|^{N+ps}}\,dxdy
-\lambda \int_\Omega \alpha(x)F(u(x))dx,
\end{aligned}
\end{equation}
where, as usual, we set
$$
F(t):=\displaystyle\int_0^{t}f(\zeta)\,d\zeta,
\
\hbox{for every}
\
t\in \erre.$$
Since $f\in C^0(\R,\R)$ and $\alpha\in L^{\infty}(\Omega)$, the functional $\mathcal{J}_{\lambda}\in C^1(X_0^{s,p}(\Omega))$ and its derivative at $u\in X_0^{s,p}(\Omega)$ is given by
$$
\langle \mathcal{J}'_{\lambda}(u), \varphi\rangle = {\displaystyle \iint_{\R^{N}\times\R^N }
\frac{|u(x)-u(y)|^{p-2}(u(x)-u(y))(\varphi(x)-\varphi(y))}{|x-y|^{N+ps}} dxdy}
$$
$$\
-\lambda\int_\Omega \alpha(x)f(u(x))\varphi(x)dx,
\
\hbox{for every}
\
\varphi \in X_0^{s,p}(\Omega).$$\par
 Thus the weak solutions of problem \eqref{problem} are exactly the critical points of the energy functional $\mathcal{J}_{\lambda}$.\par
 Therefore, the proof of the main result reduces to finding critical points of the functional by using suitable abstract approaches.\par

In this direction, we rephrase \cite[Theorem 2.1]{Ricceri} in
a slightly different version; see, for instance, \cite[Theorem 2.1]{Proca}.

\begin{theorem}\label{BMB}
Let $X$ be a reflexive real Banach space and let $\Phi,\Psi:X\to\R$
be two G\^{a}teaux differentiable functionals such that $\Phi$ is
strongly continuous, sequentially weakly lower semicontinuous and coercive, and $\Psi$
is sequentially weakly upper semicontinuous. Set $J_\lambda:=\Phi-\lambda \Psi$. Moreover, for every $r>\displaystyle\inf_X
\Phi,$ put
$$
\varphi(r):=\inf_{u\in\Phi^{-1}((-\infty,r))}\frac{\displaystyle\sup_{v\in\Phi^{-1}((-\infty,r))}\Psi(v)-\Psi(u)}{r-\Phi(u)},
$$
$$
\gamma:=\liminf_{r\to \infty}\varphi(r),\quad
\delta:=\liminf_{r\to (\inf_X \Phi)^+}\varphi(r).
$$
Then one has
\begin{itemize}
\item[$(a)$] If $\gamma<\infty$, then for each
$\lambda\in\left(0,{1}/{\gamma}\right),$ the following
alternatives exist:\\
%either\\
{\rm $(a_1)$}either  $J_\lambda$ possesses a global minimum,\\
%or\\
{\rm $(a_2)$}or there is a sequence $(u_j)_{j}$ of critical points
$($local minima$)$ of $J_\lambda$ such that $$\displaystyle\lim_{j\to \infty}
\Phi(u_j)=\infty.$$
\item[$(b)$] If $\delta<\infty$, then for each
$\lambda\in\left(0,{1}/{\delta}\right),$ the following
alternatives exist:\\
%either\\
{\rm $(b_1)$} either there is a global minimum of $\Phi$ which is a local minimum of $J_\lambda,$\\
%or\\
{\rm $(b_2)$} or there is a sequence $(u_j)_{j}$ of pairwise distinct
critical points $($local minima$)$ of $J_\lambda$ which weakly
converges to a global minimum of $\Phi,$ with $$\displaystyle\lim_{j\to \infty}
\Phi(u_j)=\inf_{u\in X}\Phi(u).$$
\end{itemize}
\end{theorem}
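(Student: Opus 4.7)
The plan is to adapt Ricceri's original approach: for each level $r>\inf_X\Phi$ produce, by the direct method, a minimizer of the restriction $J_\lambda|_{\Phi^{-1}((-\infty,r])}$; show under the assumption $\lambda\varphi(r)<1$ that this minimizer actually sits in the \emph{open} strict sublevel $\Phi^{-1}((-\infty,r))$ and is therefore a genuine local minimum of $J_\lambda$ on $X$; finally let $r\to\infty$ for $(a)$ and $r\to(\inf_X\Phi)^+$ for $(b)$ and extract the advertised dichotomies.

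I would start by fixing $r>\inf_X\Phi$ and setting $M_r:=\Phi^{-1}((-\infty,r])$. Coercivity of $\Phi$ forces $M_r$ to be bounded, sequential weak lower semicontinuity of $\Phi$ makes $M_r$ sequentially weakly closed, and reflexivity together with Eberlein--\v{S}mulian yields the sequential weak compactness of $M_r$. Since $\lambda>0$ and $\Psi$ is sequentially weakly upper semicontinuous, $J_\lambda=\Phi-\lambda\Psi$ is sequentially weakly lower semicontinuous on $M_r$, so Weierstrass produces a minimizer $\bar u_r\in M_r$.

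The heart of the argument is to show that if $\lambda\varphi(r)<1$ then $\Phi(\bar u_r)<r$. Arguing by contradiction, suppose $\Phi(\bar u_r)=r$. Because $\varphi(r)<1/\lambda$, the definition of $\varphi(r)$ provides $u_0\in\Phi^{-1}((-\infty,r))$ with
$$
\lambda\Bigl(\sup_{v\in\Phi^{-1}((-\infty,r))}\Psi(v)-\Psi(u_0)\Bigr)<r-\Phi(u_0),
$$
while the minimality inequality $J_\lambda(\bar u_r)\le J_\lambda(u_0)$ rearranges to
$$
\lambda\bigl(\Psi(\bar u_r)-\Psi(u_0)\bigr)\ge r-\Phi(u_0).
$$
Combining these two lines forces $\Psi(\bar u_r)$ to strictly exceed the supremum of $\Psi$ on $\Phi^{-1}((-\infty,r))$. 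The contradiction is then closed by exploiting the strong continuity of $\Phi$, which lets one approximate $\bar u_r$ by points of the strict sublevel, together with the sequential weak upper semicontinuity of $\Psi$, which bounds $\Psi(\bar u_r)$ from above by $\sup_{v\in\Phi^{-1}((-\infty,r))}\Psi(v)$. Since the strict sublevel is strongly open and $J_\lambda$ is G\^ateaux differentiable, the resulting $\bar u_r$ with $\Phi(\bar u_r)<r$ is a critical point of $J_\lambda$ on $X$.

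For part $(a)$, fix $\lambda\in(0,1/\gamma)$ and, using $\gamma=\liminf_{r\to\infty}\varphi(r)$, pick $r_n\to\infty$ with $\varphi(r_n)\to\gamma<1/\lambda$; for $n$ large the previous step yields critical points $u_n:=\bar u_{r_n}$ with $\Phi(u_n)<r_n$. If $\{\Phi(u_n)\}$ is bounded along a subsequence, coercivity and reflexivity give a weak limit $u^*$, and for every $v\in X$ the eventual membership $v\in M_{r_n}$ plus sequential weak lower semicontinuity of $J_\lambda$ forces $J_\lambda(u^*)\le J_\lambda(v)$, producing the global minimum of $(a_1)$; otherwise $\Phi(u_n)\to\infty$ and $(a_2)$ holds. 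Part $(b)$ follows by the same scheme with $r_n\to(\inf_X\Phi)^+$: the critical points $u_n$ satisfy $\Phi(u_n)\to\inf_X\Phi$, so up to a subsequence they converge weakly to a global minimum $u^*$ of $\Phi$; either the sequence stabilizes at $u^*$, making it a local minimum of $J_\lambda$ and giving $(b_1)$, or it supplies infinitely many pairwise distinct terms, giving $(b_2)$. The main technical obstacle will be the contradiction step establishing $\Phi(\bar u_r)<r$: it is precisely the combination of the strong continuity of $\Phi$ with the one-sided weak semicontinuity of $\Psi$, used to control $\Psi$ on the boundary $\Phi^{-1}(\{r\})$ by its values on the strict sublevel, that makes the numerical threshold $1/\varphi(r)$ sharp and encodes the full force of the hypotheses on the two functionals.
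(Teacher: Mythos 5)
The paper itself does not prove Theorem \ref{BMB}; it is imported from Ricceri \cite{Ricceri} via \cite{Proca}, so I am measuring your proposal against the standard proof in those sources. Your global architecture (constrained minima at levels $r_n\to\infty$, resp.\ $r_n\to(\inf_X\Phi)^+$, followed by the two dichotomies) is the right one, and your handling of the limiting steps in $(a)$ and $(b)$ is essentially sound. The genuine gap sits exactly where you place the ``heart of the argument''. First, minimizing $J_\lambda$ over the \emph{closed} sublevel set $M_r=\Phi^{-1}((-\infty,r])$ is the wrong starting point: $M_r$ may contain points (even whole components) at level $\Phi=r$ that do not belong to the strong closure of the open sublevel set $A_r:=\Phi^{-1}((-\infty,r))$, and $J_\lambda$ may attain its minimum over $M_r$ precisely there. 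Concretely, on $X=\R$ take $\Phi$ smooth, coercive, equal to $x^2$ near $[-1,1]$ and with an isolated local minimum of value $1$ at $x=3$, and take $\Psi$ a smooth bump supported near $3$ with $\Psi(3)=100$ and $\Psi\equiv 0$ on $[-2,2]$. Then $\varphi(1)=0<1/\lambda$ for every $\lambda>0$, yet for $\lambda>1/100$ the minimizer of $J_\lambda$ over $M_1$ is $x=3$, where $\Phi=r$; so your claim ``$\lambda\varphi(r)<1\Rightarrow\Phi(\bar u_r)<r$'' is false for the minimizer over $M_r$. Second, the mechanism you invoke to close the contradiction fails on both counts: strong continuity of $\Phi$ does \emph{not} allow you to approximate a point of $\Phi^{-1}(\{r\})$ by points of $A_r$ (see the example), and sequential weak upper semicontinuity of $\Psi$ gives $\limsup_j\Psi(w_j)\le\Psi(\bar u_r)$ along $w_j\rightharpoonup\bar u_r$, i.e.\ a \emph{lower} bound for $\Psi(\bar u_r)$, not the upper bound $\Psi(\bar u_r)\le\sup_{A_r}\Psi$ that your contradiction requires.

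The repair is to work with the open set from the outset. Set $m:=\inf_{A_r}J_\lambda$, take a minimizing sequence $(v_j)\subset A_r$, and extract $v_j\rightharpoonup v_0$ (coercivity of $\Phi$ plus reflexivity); sequential weak lower semicontinuity gives $\Phi(v_0)\le r$ and $J_\lambda(v_0)\le m$. If $\Phi(v_0)=r$, then, using $\Psi(v_j)\le\sup_{A_r}\Psi$ for the sequence elements (which \emph{do} lie in $A_r$) together with $\liminf_j\Phi(v_j)\ge\Phi(v_0)=r$, one obtains $m=\lim_j J_\lambda(v_j)\ge r-\lambda\sup_{A_r}\Psi$; on the other hand the hypothesis $\varphi(r)<1/\lambda$ produces $u_0\in A_r$ with $J_\lambda(u_0)<r-\lambda\sup_{A_r}\Psi$, whence $m\le J_\lambda(u_0)<r-\lambda\sup_{A_r}\Psi\le m$, a contradiction. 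Hence $\Phi(v_0)<r$, so $v_0\in A_r$ minimizes $J_\lambda$ on the strongly open set $A_r$ and is therefore a local minimum and a critical point. (Note that $\sup_{A_r}\Psi<\infty$ because $M_r$ is sequentially weakly compact and $\Psi$ is sequentially weakly upper semicontinuous, so these manipulations are legitimate.) With this lemma in place of your step, the remainder of your argument for parts $(a)$ and $(b)$ goes through.
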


Following the seminal work of Ricceri \cite{Ricceri}, an impressive number of publications appeared, most of them dedicated to the study of suitable extensions of his variational principle as well as of its consequences; see, for instance, the books \cite{KRV,MoPu,MRS} and the references therein. Recent applications of \cite[Theorem 2.1]{Ricceri} can be found in \cite{DMS,Molica, MoRa}. See also \cite{MoPi} for related topics.

\section{A proof of Theorem \ref{Main1}}\label{}

Let $X:=X^{s,p}_0(\Omega)$ be endowed with the norm
$$
\|u\|:=\left(\displaystyle \iint_{\R^{N}\times\R^N}\frac{|u(x)-u(y)|^p}{|x-y|^{N+ps}}\,dxdy\right)^{1/p},
\
u\in X.$$
Moreover, let $\Phi:X\rightarrow \erre$ and
$\Psi:X\rightarrow \erre$ be defined as follows
$$
\Phi(u):=\frac{1}{p}\displaystyle \iint_{\R^{N}\times\R^N}\frac{|u(x)-u(y)|^p}{|x-y|^{N+ps}}\,dxdy
\quad
\hbox{and}
\quad
\Psi(u):=\int_{\Omega}\alpha(x) F(u(x))dx,$$
\noindent so that if we set $J_\lambda:=\Phi-\lambda\Psi$ as in Theorem \ref{BMB}, then we get
 $\mathcal J_{\lambda}=J_\lambda$.\par
\indent By standard arguments, one shows that $\Phi$ is continuously G\^{a}teaux
differentiable and sequentially weakly lower semicontinuous and
that  its G\^{a}teaux derivative is the functional
$\Phi'(u)\in X^*$ given by
$$
\Phi'(u)(\varphi)={\displaystyle \iint_{\R^{N}\times\R^N }
\frac{|u(x)-u(y)|^{p-2}(u(x)-u(y))(\varphi(x)-\varphi(y))}{|x-y|^{N+ps}} dxdy},
\
\varphi\in X.$$ Moreover, $\Psi$ is continuously G\^{a}teaux differentiable, its G\^{a}teaux derivative is given by
$$
\Psi'(u)(\varphi)=\int_\Omega \alpha(x)f(u(x))\varphi(x)dx,
\
\varphi\in X.$$\par
Now, thanks to Lemma \ref{EmT}, $\Psi$
is a sequentially
weakly continuous functional. Indeed,  for every sequence $(u_j)_{j}$ in
$X$ such that $u_j\rightharpoonup u_0$ for some $u_0\in X$, we shall
prove that
\begin{equation}\label{Zzz}
\lim_{j\rightarrow \infty}\Psi(u_j)=\Psi(u_0).
\end{equation}
Since the embedding $X\hookrightarrow C^{0}(\bar\Omega)$ is compact by Lemma \ref{EmT},
there exists $c>0$ such that $u_{j}\to u_0$ in $C^{0}(\bar\Omega)$ and
\begin{equation}\label{Zz}
\|u_{j}\|_\infty \leq c,
\
\hbox{for every}
\
j\in \N.
\end{equation}
 On the other hand,
$$
\lim_{j\rightarrow \infty}\alpha(x)F(u_{j}(x))=\alpha(x)F(u_{0}(x)),
$$
and, by inequality \eqref{Zz}
$$
|\alpha(x)F(u_{j}(x))|\leq \alpha(x)\max_{|t|\leq c}|F(t)|,
\
\hbox{for a.e.}
\
x\in \bar \Omega
\
\hbox{and every}
\
j\in \N.$$
Hence, since $\alpha\in L^{\infty}(\Omega)$, by using the Lebesgue dominated convergence theorem it follows that
\eqref{Zzz} holds.\par

\indent Now, let $(c_j)_{j}$ be a positive real sequence such
that $$
\displaystyle \lim_{j\rightarrow \infty}c_j=L
\quad
\hbox{and}
\quad
\lim_{j\rightarrow \infty}\displaystyle \frac{\displaystyle\max_{|t|\leq
c_j}F(t)}{c_j^{p}}=A_{L}.
$$
\noindent Set
$$\displaystyle r_j:=\frac{c_j^{p}}{K^{p}p}>0,
\
\hbox{for every}
\
  j \in \N.$$ The continuous embedding $X\hookrightarrow C^{0}(\bar\Omega)$ yields
$$\|v\|_{{\infty}}\leq c_j,
\
\hbox{for every}
\
v\in\Phi^{-1}((-\infty,r_j))
\
\hbox{and}
\
j\in \N.$$ Thus
\begin{eqnarray*}\notag
 \|v\|_{{\infty}}\leq K\|v\|=K(pr_j)^{1/p}=c_j,
 \
\hbox{for every}
\
j\in \N.
\end{eqnarray*}
Then
\begin{eqnarray*}\notag
 \varphi(r_j) &=& \inf_{u\in\Phi^{-1}((-\infty,r_j))}\frac{\displaystyle\sup_{v\in\Phi^{-1}((-\infty,r_j))}\Psi(v)-\Psi(u)}{r_j-\Phi(u)} \\
 &\leq & \frac{\displaystyle\sup_{v\in\Phi^{-1}((-\infty,r_j))}\Psi(v)}{r_j}
\leq
p\|\alpha\|_{\infty}|\Omega|K^{p}\frac{\displaystyle\max_{|t|\leq
c_j}F(t)}{c_j^{p}},
\
\hbox{for every}
\
j\in\N,
\end{eqnarray*}
 taking into account that $\Phi(0)=\Psi(0)=0$.\par
 Hence, if $\lambda<\lambda_2^{L}:=\displaystyle\frac{1}{p\|\alpha\|_\infty |\Omega| K^pA_L}$ it follows that
$$
\displaystyle \beta_L\leq\liminf_{j\rightarrow
\infty}\varphi(r_j)\leq p\|\alpha\|_{\infty}|\Omega|K^{p} A_{L}<\frac{1}{\lambda}<\infty,
$$
where
\begin{equation*}\label{beta}
 \beta_L:=\begin{cases}
  \gamma:=\displaystyle\liminf_{r\to \infty}\varphi(r)\,\,\,\,\,\,\,\,\,\, \hbox{if}\,\, L=\infty \\
 \delta:=\displaystyle\liminf_{r\to 0^+}\varphi(r)\,\,\,\,\,\,\,\,\,\, \hbox{if}\,\, L=0^+.
\end{cases}
\end{equation*}

Let us denote by $B_r(x_0)$ the $N$-dimensional open ball centered at $x_0\in\R^N$ and of radius $r>0$. As $\Omega$ is open, we can certainly choose a point $x_0\in\Omega$ and a number $\tau>0$ so that ${B}_{\tau}(x_0)\subseteq\Omega$.\par If we set $\tau:=\displaystyle\sup_{x\in \Omega}\textrm{dist}(x,\partial \Omega),$ the point $x_0$ is the Chebyshev center of $\Omega$. Hence let us fix such $x_0$ and $\tau$ and define the function $\theta$ to be
\begin{equation}\label{defuetatzero}
\theta(x):=
\left\{
\begin{array}{ll}
0 & \mbox{ if } x \in \R^N \setminus B_{{\tau}}(x_0) \\
\displaystyle 1 & \mbox{ if } x \in B_{{\tau/2}}(x_0)\\
\displaystyle 2\frac{\tau- |x-x_0|}{\tau}  & \mbox{ if } x \in B_{\tau}(x_0)\setminus B_{{\tau}/2}(x_0) \\

\end{array}
\right.
\end{equation}
for every $x\in\R^N$, where $|\cdot|$ denotes the usual Euclidean norm in $\R^N$. Since $\theta \equiv 0$ outside the compact
ball
 $\overline B_{{\tau}}(x_0)$, we can easily deduce that $\theta\in X$.\par
 We now state and prove our main lemma.

\begin{lemma}\label{lemma1}
Let $\theta\in X$ be the function defined in \eqref{defuetatzero}. Then
\begin{equation}\label{stimanormauetatzero}
\displaystyle \int_{\R^{N}\times\R^N}\frac{|\theta(x)-\theta(y)|^p}{|x-y|^{N+ps}}\,dxdy\leq \kappa_{p,N,s}{ \omega^2_N}\tau^{N-sp},
\end{equation}
where
$
\omega_N
$ denotes the volume of the unit ball in $\R^N,$ and
$$
\kappa_{p,N,s}:=\displaystyle \frac{{2^{p(3-s)-N}}}{p}\left(1-\frac{1}{2^N}\right)^2+\frac{2^{2+ps-N}}{ps(N+p(1-s))}+\frac{2}{(N-ps)ps}\left(1-\frac{1}{2^{N-ps}}\right).
$$

\end{lemma}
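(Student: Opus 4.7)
The plan is to exploit the fact that $\theta$ depends only on $|x-x_0|$ and is radially affine on an annulus. I partition $\R^N$ into three regions on which $\theta$ has qualitatively distinct behavior: $A_1:=B_{\tau/2}(x_0)$, where $\theta\equiv 1$; $A_2:=B_\tau(x_0)\setminus B_{\tau/2}(x_0)$, where $\theta$ is radially affine and in particular globally $(2/\tau)$-Lipschitz with $0\le\theta\le 1$; and $A_3:=\R^N\setminus B_\tau(x_0)$, where $\theta\equiv 0$. Since $\theta$ is constant on $A_1$ and on $A_3$, the integrand $|\theta(x)-\theta(y)|^p/|x-y|^{N+ps}$ vanishes on $A_1\times A_1$ and on $A_3\times A_3$; by the symmetry $x\leftrightarrow y$, the Gagliardo double integral reduces to a finite collection of off-diagonal contributions, which I would arrange so as to match exactly the three summands of $\kappa_{p,N,s}$.

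For the annulus-interior piece $A_2\times A_2$ the kernel is singular on the diagonal, so I split this block into a near part $\{|x-y|\le \tau/2\}$ and a far part $\{|x-y|>\tau/2\}$. On the near part I use the Lipschitz bound $|\theta(x)-\theta(y)|^p\le(2/\tau)^p|x-y|^p$, reducing to the locally integrable kernel $|x-y|^{p(1-s)-N}$ (here $p(1-s)>0$), which I evaluate in spherical coordinates around the outer variable. On the far part I use the pointwise bound $|\theta(x)-\theta(y)|\le 1$ together with $|x-y|^{-N-ps}\le(\tau/2)^{-N-ps}$ and integrate against $dxdy$ over $A_2\times A_2$, whose measure squared produces the factor $\omega_N^2\tau^{2N}(1-2^{-N})^2$. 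After accounting for the intrinsic powers of $\tau$, this yields the $(1-2^{-N})^2$ factor appearing in the first summand of $\kappa_{p,N,s}$.

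For the mixed pieces $A_1\times A_2$ and $A_2\times A_3$ I would use the same near/far splitting: the Lipschitz bound on $\{|x-y|\le\tau/2\}$ contributes an integrand of the form $|x-y|^{p(1-s)-N}$, while on $\{|x-y|>\tau/2\}$ the bound $|\theta(x)-\theta(y)|\le 1$ makes $|x-y|^{-N-ps}$ integrable at infinity. The successive radial integrations with weights $r^{N-1}$ and $r^{p(1-s)-1}$ are responsible for the denominator $ps(N+p(1-s))$ of the second summand of $\kappa_{p,N,s}$. For the remaining piece $A_1\times A_3$, where $|\theta(x)-\theta(y)|\equiv 1$, fixing $x\in A_1$ and using $|x-y|\ge \tau-|x-x_0|$ for $y\in A_3$ gives
$$\int_{A_3}\frac{dy}{|x-y|^{N+ps}}\le\frac{N\omega_N}{ps}\bigl(\tau-|x-x_0|\bigr)^{-ps},$$
after which integrating over $x\in A_1$ in spherical coordinates and exploiting the elementary inequality $\tau-r\ge r$ on $[0,\tau/2]$ produces the factor $(1-2^{-(N-ps)})/(N-ps)$ of the third summand.

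The principal obstacle is bookkeeping. The individual integrals are standard, but the sharp constant $\kappa_{p,N,s}$ is the sum of three contributions whose geometric prefactors (in particular the powers of $2$ coming from $2^{-N}$, $2^{-(N-ps)}$ and $2^{p(3-s)-N}$) must be reconciled with the halving ratio in the definition \eqref{defuetatzero} of $\theta$. The splitting threshold $\tau/2$ in Step~2 and the comparison $\tau-r\ge r$ in Step~3 must be chosen so that, after summation and factoring out the common $\omega_N^2\tau^{N-ps}$, the three contributions assemble \emph{exactly} into the constant declared in the lemma; a rougher choice would still yield an upper bound of the correct order, but would not reproduce the tight constant stated in \eqref{stimanormauetatzero}.
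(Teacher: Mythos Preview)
Your three-region decomposition and your treatment of $A_1\times A_3$ via $|x-y|\ge\tau-|x-x_0|$ match the paper's $J_4$. The divergence is in the other three blocks: the paper uses \emph{no} near/far splitting. For $J_1=A_2\times A_2$ it applies the reverse triangle inequality $\bigl||y-x_0|-|x-x_0|\bigr|\le|x-y|$ on the whole product together with the diameter bound $|x-y|\le 2\tau$. For $J_2$ (twice $A_2\times A_3$) and $J_3$ (twice $A_1\times A_2$) it keeps the \emph{exact} annulus value $\theta(y)=2(\tau-|y-x_0|)/\tau$ and pairs it with the sharp distance from the annulus point to the opposite region, $|x-y|\ge\tau-|y-x_0|$ (resp.\ $|x-y|\ge|x-x_0|-\tau/2$). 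One radial integration in $x$ then leaves the weight $(\tau-|y-x_0|)^{p(1-s)}$, and a second radial integration over the annulus, with the substitution $z=\tau-|y-x_0|$, produces $\int_0^{\tau/2}z^{N+p(1-s)-1}\,dz$; \emph{this} is the origin of the denominator $N+p(1-s)$, not two separate integrals with weights $r^{N-1}$ and $r^{p(1-s)-1}$ as your outline suggests.

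Your near/far device is sound and would certainly yield a bound $C\,\omega_N^2\tau^{N-sp}$, but not with the declared $\kappa_{p,N,s}$. The far part of $A_2\times A_2$ alone already contributes $2^{N+ps}(1-2^{-N})^2$ rather than $2^{p(3-s)-N}(1-2^{-N})^2/p$, with a near-part term still to be added; and the mixed blocks handled by Lipschitz-plus-cutoff do not collapse to the single term $2^{2+ps-N}/\bigl(ps(N+p(1-s))\bigr)$ that the paper obtains from $J_2+J_3$. So the bookkeeping worry you flag is decisive: your route proves a lemma of the same shape with a different constant---perfectly adequate for Theorem~\ref{Main1}, where only finiteness of the bound is used---but not the statement as written.
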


\begin{proof}
A direct and straightforward computation ensures that
\begin{equation}\label{Sommina}
   \iint_{\R^{N}\times\R^N}\frac{|\theta(x)-\theta(y)|^p}{|x-y|^{N+ps}}\,dxdy=\displaystyle\sum_{j=1}^{4}J_j,
\end{equation}
\noindent  where we set
$$
J_1:=\displaystyle{\int_{B_{\tau}(x_0)\setminus B_{{\tau/2}}(x_0)}\int_{B_{\tau}(x_0)\setminus B_{ {\tau/2}}(x_0)}\frac{|\theta(x)-\theta(y)|^p}{|x-y|^{N+ps}}}\,dxdy,
$$
$$
J_2:={2} \displaystyle{
   \int_{B_{\tau}(x_0)\setminus B_{{\tau/2}}(x_0)}\int_{\R^N\setminus B_{{\tau/2}(x_0)}}\frac{|\theta(x)-\theta(y)|^p}{|x-y|^{N+ps}}
   }\,dxdy,
$$
$$
   J_3:={2}\displaystyle{\int_{B_{{\tau/2}}(x_0)}\int_{B_{\tau}(x_0)\setminus B_{{\tau/2}}(x_0)}\frac{|\theta(x)-\theta(y)|^p}{|x-y|^{N+ps}}}\,dxdy,
   $$
   and
$$
J_4:=\displaystyle 2\displaystyle{\int_{B_{{\tau/2}}(x_0)}\int_{\R^N \setminus B_{\tau}(x_0)}\frac{|\theta(x)-\theta(y)|^p}{|x-y|^{N+ps}}}\,dxdy.
$$
\noindent On the other hand, by virtue of
$$
|y-x_0|-|x-x_0|\leq |x-y|,
$$
and
{\small$$
|x-y|\leq |x-x_0|+|y-x_0|\leq 2\tau,\ \mbox{for all}\, \ (x,y)\in (B_{\tau}(x_0)\setminus B_{ {\tau/2}}(x_0))\times(B_{\tau}(x_0)\setminus B_{{\tau/2}}(x_0)),
$$}one has
$$
\begin{array}{ll}
  J_1 & = \displaystyle\frac{2^p}{\tau^p}\displaystyle{\int_{B_{\tau}(x_0)\setminus B_{ {\tau/2}}(x_0)}\int_{B_{\tau}(x_0)\setminus B_{{\tau/2}}(x_0)}\frac{||y-x_0|-|x-x_0||^p}{|x-y|^{N+ps}}}\,dxdy \\
  & \leq \displaystyle\frac{2^p}{\tau^p}\displaystyle{\int_{B_{\tau}(x_0)\setminus B_{ {\tau/2}}(x_0)}\int_{B_{\tau}(x_0)\setminus B_{{\tau/2}}(x_0)}\frac{|x-y|^p}{|x-y|^{N+ps}}}\,dxdy \\
    &=\displaystyle {2^{p(3-s)-N}}\left(1-\frac{1}{2^N}\right)^2\frac{\tau^{N-ps}}{p}\omega_N^2.\\
\end{array}
$$
 Furthermore, since
 for every
$y\in B_{\tau}(x_0)\setminus B_{{\tau/2}}(x_0),$
$$
\int_{\R^N\setminus B_{\tau}(x_0)}\frac{|\tau -|y-x_0||^p}{|x-y|^{N+ps}}\,dx
=\omega_N\int_{\tau -|y-x_0|}^{\infty}\frac{|\tau -|y-x_0||^p}{\varrho^{ps+1}} d\varrho,
$$
 it follows that
$$
\begin{array}{ll}
  J_2 & = \displaystyle\frac{2^{p+1}}{\tau^p}\displaystyle{\int_{B_{\tau}(x_0)\setminus B_{{\tau/2}}(x_0)}\int_{\R^N\setminus B_{\tau}(x_0)}\frac{|\tau -|y-x_0||^p}{|x-y|^{N+ps}}}\,d x dy \\
   & = \displaystyle\frac{2^{p+1}\omega_N}{\tau^p}\displaystyle{\int_{B_{\tau}(x_0)\setminus B_{{\tau/2}}(x_0)}\int_{\tau -|y-x_0|}^{\infty}\frac{|\tau -|y-x_0||^p}{\varrho^{ps+1}}} d\varrho dy \\
   & = \displaystyle\frac{2^{p+1}\omega_N}{ ps\tau^p}\displaystyle{\int_{B_{\tau}(x_0)\setminus B_{{\tau/2}}(x_0)} |\tau -|x-y||^{p(1-s)} dy} \\
   & = \displaystyle\frac{2^{p+1}\omega_N^2}{ ps\tau^p}\displaystyle{\int_{0}^{\tau/2}z^{N+(1-s)p-1} dz}
    = \displaystyle\frac{2^{1+ps-N}}{s(N+p(1-s))}\frac{\tau^{N-ps}}{p}\omega_N^2,\\
\end{array}
$$
as well as
$$
\begin{array}{ll}
  J_3 & = \displaystyle\frac{2^{p+1}}{\tau^p}\displaystyle{\int_{B_{{\tau/2}}(x_0)}\int_{B_{\tau}(x_0)\setminus B_{{\tau/2}}(x_0)}\frac{||x-x_0|-{\tau/2}|^p}{|x-y|^{N+ps}}}\,dxdy \\
  & = \displaystyle\frac{2^{p+1}}{\tau^p}\displaystyle{\int_{B_{\tau}(x_0)\setminus B_{{\tau/2}}(x_0)}\int_{B_{{\tau/2}}(x_0)}\frac{||x-x_0|-{\tau/2}|^p}{|x-y|^{N+ps}}}\,dydx \\
  & = \displaystyle\frac{2^{p+1}\omega_N}{\tau^p}\displaystyle{\int_{B_{\tau}(x_0)\setminus B_{\tau/2(x_0)}}\left||x-x_0|-\tau/2\right|^p\int_{|x-x_0|-\tau/2}^{|x-x_0|+\tau/2}\frac{d\varrho}{\varrho^{ps+1}}}\, dx \\
   & \leq \displaystyle\frac{2^{p+1}\omega_N}{ps\tau^p}\displaystyle{\int_{B_{\tau}(x_0)\setminus B_{ {\tau/2}}(x_0)}\left||x-x_0|-{\tau/2}\right|^{p(1-s)} }dx \\
   & = \displaystyle\frac{2^{p+1}\omega_N}{ps\tau^p}\displaystyle{\int_{0}^{{\tau/2}} z^{p(1-s)+N-1} dz}
   = \displaystyle\frac{2^{1+ps-N}}{s(N+p(1-s))}\frac{\tau^{N-ps}}{p}\omega_N^2, \\
\end{array}
$$
observing that
$$
\int_{B_{\tau/2}(x_0)}\frac{dy}{|x-y|^{N+ps}} =\omega_N\int_{|x-x_0|-\tau/2}^{|x-x_0|+\tau/2}\frac{d\varrho}{\varrho^{ps+1}},
\
\hbox{for every}
\
x\in B_{\tau}(x_0)\setminus B_{{\tau/2}}(x_0).$$
\noindent Finally,
$$
\begin{array}{ll}
  J_4 & = \displaystyle 2\displaystyle{\int_{B_{{\tau/2}}(x_0)}\int_{\R^N \setminus B_{\tau}(x_0)}\frac{1}{|x-y|^{N+ps}}}\,dxdy \\
  & =  \displaystyle {2} \omega_N\displaystyle{\int_{B_{{\tau/2}}(x_0)}\int_{ \tau-|y-x_0|}^{\infty} \frac{1}{\varrho^{1+ps}}d\varrho} dy
   = \displaystyle\frac{2}{ps} \omega_N\displaystyle{\int_{B_{{\tau/2}}(x_0)} \frac{1}{(\tau-|y-x_0|)^{ps}} dy} \\
   & = \displaystyle\frac{2\omega^2_N}{ps}\displaystyle{\int_{{\tau/2}}^{\tau} z^{N-ps-1}} dz
    = \displaystyle\frac{2}{(N-ps)s}\left(1-\frac{1}{2^{N-ps}}\right)\frac{\tau^{N-ps}}{p}\omega^2_N, \\
\end{array}
$$
due to the fact that
$$
\int_{\R^N \setminus B_{\tau}(x_0)}\frac{1}{|x-y|^{N+ps}}\,dx =\omega_N\int_{\tau -|y-x_0|}^{\infty}\frac{1}{\varrho^{1+ps}} d\varrho,
\
\hbox{for every}
\
y\in B_{{\tau/2}}(x_0).$$ The conclusion follows by \eqref{Sommina} and the above estimates.
\end{proof}
\indent  If $L=\infty,$ 
then we claim that the functional $J_\lambda$ is
unbounded from below. For our goal, let $(\zeta_j)_{j}$ be a real sequence such that
$$\displaystyle \lim_{j\rightarrow
\infty}\zeta_j=\infty$$ and
\begin{equation}\label{B1}
\lim_{j\rightarrow \infty}\displaystyle \frac{F(\zeta_j)}{\zeta_j^{p}}=B_{\infty}.
\end{equation}
 For each $j\in \enne$, let $w_j=\zeta_j\theta\in X$. Then, by Lemma \ref{lemma1},
\begin{equation*}
\begin{aligned}
\Phi(w_j)\leq \kappa_{p,N,s}{ \omega^2_N}\frac{\tau^{N-ps}}{p}\zeta_j^{p},
\
\hbox{for every}
\
j\in \N.
\end{aligned}
\end{equation*}
\par
\indent Moreover, since $\displaystyle\inf_{\zeta\geq 0}F(\zeta)=0$ and $\alpha_0:={\rm essinf}_{x\in \Omega}\alpha(x)>0$, we have
{\small$$
\int_\Omega \alpha(x)F(w_j(x))dx\geq  \alpha_0\int_{B_{\tau/2}(x_0)} F(w_j(x))dx\geq \alpha_0 \frac{\tau^N}{2^N}\omega_NF(\zeta_j)>0,
\
\hbox{for every}
\
j\in\N,$$}since $\theta\equiv 1$ on $B_{\tau/2}(x_0)$. Then, on account of \eqref{stimanormauetatzero},
it follows that
\begin{equation*} \label{2}
J_\lambda(w_j)\leq
\kappa_{p,N,s}{ \omega^2_N}\frac{\tau^{N-ps}}{p}\zeta_j^{p}-\lambda \alpha_0 \frac{\tau^N}{2^N}\omega_NF(\zeta_j),
\
\hbox{for every}
\
j\in \N.
\end{equation*}
\par
\indent If $B_\infty<\infty$, since $\lambda>\lambda^\infty_1,$ we can fix $$\displaystyle \varepsilon\in
\Big(\kappa_{p,N,s}\frac{{ \omega_N}}{\lambda p \tau^{ps}\alpha_0}\frac{2^N}{B_\infty},1\Big).$$
By using (\ref{B1}), 
we get
 $\nu_\varepsilon$ such that
$$
F(\zeta_j)> \varepsilon B_\infty \zeta_j^{p},\,\,\,\, \mbox{for all}\, \ j>\nu_\varepsilon.
$$
\indent Then one has for every $j>\nu_\varepsilon,$ \par
{\small\begin{eqnarray*}\notag
  J_\lambda(w_j)\leq \kappa_{p,N,s}{ \omega^2_N}\frac{\tau^{N-ps}}{p}\zeta_j^{p}-\lambda \alpha_0 \frac{\tau^N}{2^N}\omega_NF(\zeta_j)
 \leq
 \left(\kappa_{p,N,s}\frac{\omega_N}{p\tau^{ps}}-\lambda \varepsilon B_\infty \frac{\alpha_0}{2^N}\right)\tau^N\omega_N\zeta_j^{p}.
\end{eqnarray*}}

 Consequently,  since
$$\displaystyle\lim_{j\rightarrow \infty}\zeta_j=+\infty
\
\hbox{ and}
\
\varepsilon>\kappa_{p,N,s}\displaystyle\frac{\omega_N}{\lambda p\tau^{ps}\alpha_0}\frac{2^N}{B_\infty},$$ it follows that,
$$
\displaystyle \lim_{j\rightarrow \infty}J_\lambda(w_j)=-\infty.
$$
\indent If $B_\infty=\infty$, let us fix $$\displaystyle
M>2^N\kappa_{p,N,s}\frac{{ \omega_N}}{\lambda p\tau^{ps}\alpha_0}.$$
By using again (\ref{B1}), we get $\nu_M$ such that
$$
F(\zeta_j)> M \zeta_j^{p},\,\,\,\, \mbox{for all}\, \  j>\nu_M.
$$
\indent Now we have for every $j>\nu_M,$
{\small\begin{eqnarray*}\notag
J_\lambda(w_j)  \leq \kappa_{p,N,s}{ \omega^2_N}\frac{\tau^{N-ps}}{p}\zeta_j^{p}-\lambda \alpha_0 \frac{\tau^N}{2^N}\omega_NF(\zeta_j)
 =\left(\kappa_{p,N,s}\frac{\omega_N}{p\tau^{ps}}-\lambda M \frac{\alpha_0}{2^N}\right)\tau^N\omega_N\zeta_j^{p}.
\end{eqnarray*}}

 Bearing in mind the choice of $M$ again, we get
$$
\displaystyle \lim_{j\rightarrow \infty}J_\lambda(w_j)=-\infty.
$$
\noindent Therefore, thanks to Theorem \ref{BMB} - Part $(a)$, the functional $J_\lambda$ admits an unbounded
sequence $(u_{\lambda,j})_{j}\subset X$ of critical points.\par
\smallskip
If $L=0^+,$ 
then an argument similar to the one above shows that $u_0\equiv 0$ is not a local minimum point for the functional $J_\lambda$. By Theorem \ref{BMB} - Part $(b)$, the functional $J_\lambda$ admits a sequence $(u_{\lambda,j})_{j}\subset X$ of pairwise distinct
critical points $($local minima$)$ such that
$$
\displaystyle \lim_{j\rightarrow \infty}\|u_{\lambda,j}\|=0.
$$
Finally, by Lemma \ref{EmT} one also have
$\displaystyle\lim_{j\rightarrow \infty}\|u_{\lambda,j}\|_\infty=0$ as claimed.
This completes the proof of Theorem~\ref{Main1}.
\qed

\section{Final comments and remarks}\label{}

Let us give some comments concerning Corollary~\ref{Main2}. To this end, assume that $f:\R\rightarrow \R$ is a nonnegative continuous function with $f(0)=0$.
It is easily seen that Corollary \ref{Main2} can be obtained by applying Theorem \ref{Main1} to the nonlocal problem
\begin{equation}\label{problemX}
\left\{ \begin{array}{ll}
(-\Delta)^{s}_{p}u= \lambda \alpha(x)f^+(u) & \hbox{in} \, \, \Omega\\
u=0 & \hbox{in} \,  \, \RR^{N}\setminus \Omega, \end{array} \right.
\end{equation}
where
$f^+:\R\rightarrow \R,$  defined by
\begin{equation*}\label{fpiu}
 f^+(t):=\begin{cases}
  f(t)\,\,\,\,\,\,\,\, \hbox{if}\,\, t>0 \\
 0\,\,\,\,\,\,\,\,\,\,\,\,\,\,\, \hbox{if}\,\, t\leq 0,
\end{cases}
\end{equation*}
 is continuous for every $t\in \R$.

 Indeed, since $f$ is nonnegative, one has
 $$\displaystyle\max_{|\zeta|\leq t}\int_0^{\zeta}f^+(x)dx=\max_{|\zeta|\leq t}\int_0^{\zeta}f(x)dx=F(t),
 \
 \hbox{for every}
 \
 t\in [0,+\infty),
 $$
  so that the assumptions of Corollary~\ref{Main2} actually give
  $A_L=0$ and $B_L=\infty$.
   Consequently, the main conclusions hold with $\lambda^L_1=0$ and $\lambda^L_2=\infty$.

   Finally, to conclude the proof, we just need to prove that the solutions are nonnegative.
 To this end, let
 $$\xi^{\pm}:=\max\{0,\pm\xi\},
 \
 \hbox{ for every}
 \
 \xi\in \R,$$
 and let $u\in X^{s,p}_0(\Omega)$ be a weak solution of \eqref{problemX}. Then $u^{\pm}\in X^{s,p}_0(\Omega)$ and
\begin{equation}\label{problemXw}
\int_\Omega \alpha(x)f^{+}(u(x))u^{-}(x)dx=0.
\end{equation}
Furthermore, notice that
\begin{equation}\label{problemXww}
|\xi^--\eta^-|^p\leq |\xi-\eta|^{p-2}(\xi-\eta)(\eta^--\xi^-),
\
\hbox{for every}
\
\xi,\eta\in \R.
\end{equation}
 Then, by virtue of \eqref{problemXw} and \eqref{problemXww}, and by testing $J'_\lambda$ with $-u^-\in X^{s,p}_0(\Omega)$, we obtain
$$
0=\langle {J}'_{\lambda}(u), -u^-\rangle = {\displaystyle \iint_{\R^{N}\times\R^N }
\frac{|u(x)-u(y)|^{p-2}(u(x)-u(y))(u^-(x)-u^-(y))}{|x-y|^{N+ps}} dxdy}
$$
$$
\geq {\displaystyle \iint_{\R^{N}\times\R^N }
\frac{|u^-(x)-u^-(y)|^{p-2}}{|x-y|^{N+ps}} dxdy}.
$$
This implies that $u^-$ is constant on $\R^N$ and since $u^-$ vanishes  outside $\Omega$, it follows that $u^-=0$ on the entire space $\R^N$. Thus, $u\geq 0$ a.e.\ in $\Omega$ as claimed.
This completes the proof of Corollary~\ref{Main2}.
\qed

We conclude the paper by an application of Corollary~\ref{Main2}.

\begin{example}\label{esempio}\rm{
Let us consider the following nonlocal fractional problem
\begin{equation}\label{problemFine}
\left\{ \begin{array}{ll}
(-\Delta)^{s}_{p}u= \lambda f(u) & \mbox{in} \, \,   \Omega\\
u=0 & \mbox{in} \, \,   \RR^{N}\setminus \Omega, \end{array} \right.
\end{equation}
where $\Omega\subset \RR^N$ is a bounded domain with smooth (Lipschitz) boundary $\partial\Omega,$ $s\in (0, 1)$, $p>N/s$, and $f:\mathbb{R}\to \mathbb{R}$ is the function defined by
$$
f(t):=\begin{cases}
((k+1)!^p-k!^p)\dfrac{g_k(t)}{\displaystyle\int_{a_k}^{b_k}g_k(\zeta)d\zeta}
&\mbox{if } t\in \bigcup_{k\geq 1}[a_k,b_k]\\
0&\mbox{otherwise},
\end{cases}
$$
where $$
a_k:=\frac{2k!(k+2)!-1}{4(k+1)!} \,\,\,\,\,\,\,\,\,\,\,  \mbox{and}\,\,\,\,\,\,\,\,\,\ b_k:=\frac{2k!(k+2)!+1}{4(k+1)!},
$$
and
$g_k:[a_k,b_k]\rightarrow \mathbb{R}$ is the continuous function given for every $k\geq 1,$ by
$$
g_k(t):=\sqrt{\frac{1}{16(k+1)!}-\left(t-\frac{k!(k+2)}{2}\right)^2}, \quad\mbox{} t\in [a_k,b_k].
$$
By virtue of
Corollary \ref{Main2}, for every
$$
\lambda>\kappa_{p,N,s}\frac{{ \omega_N}}{p \tau^{ps}}{2^{N-p}},
$$
problem \eqref{problemFine} admits a sequence $(u_{\lambda,j})_j\subset X_0^{s,p}(\Omega)$ of (nonnegative) weak solutions such that
$$\displaystyle\lim_{j\rightarrow \infty}\|u_{\lambda,j}\|=\infty.$$
}
\end{example}

\section*{Acknowledgments} The manuscript was realized under the auspices of the Italian MIUR project \textit{Variational methods, with applications to problems in mathematical physics and geometry} (2015KB9WPT 009)
and
the Slovenian Research Agency program P1-0292 and grants N1-0114 and N1-0083.

\end{document}